\newtheorem{theorem}{Theorem}[section]
\newtheorem{definition}[theorem]{Definition}
\newtheorem{example}[theorem]{Example}
\newtheorem{lemma}{Lemma}[section]
\newtheorem{problem}[theorem]{Problem}
\newsavebox{\hold}
\newlength{\holdht}
\begin{document}

\title[On Caristi fixed point theorem for set-valued mappings]{On Caristi fixed point theorem for set-valued mappings}
\author[K. Chaira, S. Chaira and S. Lazaiz]{K. Chaira$^1$, S. Chaira$^2$ and S. Lazaiz$^{3}$}
\address{$^1$L3A Laboratory\\ Department of Mathematics and Computer Sciences\\ Faculty of Sciences Ben M'sik\\ University of Hassan II Casablanca\\Morocco.}
\email{chaira\_karim@yahoo.fr}
\address{$^2$Laboratory of Mathematics and Applications\\Faculty of Sciences and Technologies Mohammedia\\ University Hassan II Casablanca\\ Morocco.}
\email{soumia\_chaira@yahoo.fr}
\address{$^3$LaSMA Laboratory\\ Department of Mathematics\\ Faculty of Sciences Dhar El Mahraz\\ University Sidi Mohamed Ben Abdellah, Fes, Morocco.}
\email{samih.lazaiz@usmba.ac.ma}
\subjclass[2010]{Primary: 47H09. Secondary: 47H10}

\keywords{Caristi fixed point theorem, set-valued mappings, ordered metric spaces, reflexive Banach spaces} 

\begin{abstract}
The aim of this paper is to discuss Penot's problem on a generalization of Caristi's fixed point theorem. We settle this problem in the negative and we present some new theorems on the existence of fixed points of set-valued mappings in ordered metric spaces and reflexive Banach spaces.
\end{abstract}

\maketitle

\section{Introduction}

Caristi fixed point theorem is known as one of the most important results in metric fixed point theory \cite{ref2}. It is not only a generalization of the Banach contraction principle \cite{ref3} but it has also been proven to be equivalent to metric completeness \cite[Theorem 6]{ref4}. Moreover, it has been the subject of various generalizations and extensions (see e.g., \cite{ref6,ref5,ref7} and the related references therein). For instance, in attempting to generalize Caristi's fixed point theorem, Kirk \cite{ref1} raised the problem of whether a self-mapping $T$ has a fixed point on a metric space $(M,d)$ such that for all $x \in M$ 
$$\eta( d ( x , Tx )) \leq \phi( x ) -\phi( Tx ),$$
where $\eta$ is a function from $\mathbb{R}_+$, the set of all nonnegative reals, into $\mathbb{R}_+$, having appropriate properties. This problem has been settled in the negative by Khamsi in \cite{ref1}. However, in order to generalize Caristi's fixed point theorem many works have been made in the setting of set-valued mappings (cf. \cite{ref16,ref8,ref17,ref18,ref9}). In particular,  Penot in \cite{ref9} asked the following problem.

\begin{problem}[Penot 1979]
  Let $(M,d)$ be a complete metric space and $\phi : M\rightarrow \mathbb{R}_+$ a lower semi-continuous mapping. Let $T:M\rightarrow \mathcal{C}(M)$ be a set-valued mapping with nonempty closed values such that 
  \begin{equation}
    dist(x,T(x))\leq \phi(x)-\inf \phi(T(x)),\quad \forall x\in M.
  \end{equation}
  Does $T$ have a fixed point?
\end{problem}

Penot pointed out that the problem has a positive answer in the special case when $\phi(x)=(1-k)^{-1}dist(x,T(x))$ and $T$ has a closed graph. This is known as Nadler's fixed point theorem \cite{ref10}.

\medskip
The rest of this paper is organized as follows. In section \ref{sec2} some notions and notations are introduced and an example is given to answer Penot's problem. In attempt to improve Caristi's fixed point theorem, section \ref{sec3} concerns with the existence of fixed point for set-valued mappings in the spirit of Penot's formulation. In section \ref{sec4}, we obtain Caristi-type generalization in the framework of reflexive Banach spaces.

\section{Preliminaries}\label{sec2}

We begin by recalling Caristi-type fixed point theorem for set-valued mappings. This theorem is due to Khamsi.

\begin{theorem}\cite[Theorem 4]{ref1}\label{Pthm}
Let $M$ be a complete metric space. Let $T : M \rightarrow 2^M$ be a set-valued map such that $T( x )$ is not empty and for all $x \in M$ there exists $y \in T ( x )$ such that
$$d ( x , y) \leq \phi( x ) -\phi( y ),$$
where $\phi$ is lower semi-continuous. Then $T$ has a fixed point, i.e., there exists $x \in M$ such that $x\in T ( x )$.
\end{theorem}  

\medskip
First, let us give an example to answer Penot's problem in the negative.

\begin{example}
Let $M=\mathbb{R}_+$ and set for each $x\in M$ 
$$\phi(x)=\frac{1}{x+1}$$
and let 
$$T(x)=[x+\phi(x),\infty).$$

Then $\phi$ is lower semi-continuous and $T$ has non-empty closed values. Moreover, we have for all $x\in M,$ 
$$\inf\;\phi(T(x))=0.$$
In addition, $y\in T(x)$ if and only if $y-x\geq \phi(x),$
thus 
$$dist(x,T(x))=\inf_{y\in T(x)}|x-y|=\phi(x)$$ which implies that for all $x\in M,$
$$ dist(x,T(x))\leq \phi(x)-\inf \phi(T(x)).$$
It is clear that $T$ does not have any fixed point in $M$.
\end{example}

Though the above example gives a negative answer to Penot's problem, one can ask when replacing $\mathcal{C}(M)$ by $\mathcal{K}(M)$ the family of all nonempty compact  subsets of $M$, does $T$ have a fixed point? The simple example below gives a negative answer.

\begin{example}
Let $M=[1,+\infty)$ and set for all $x\in M$ 
$$T(x)=[x+\frac{1}{x(x+1)},x+1]\quad\text{and} \quad \phi(x)=\frac{1}{x}.$$
Obviously we have 
$$ dist(x,T(x))\leq \phi(x)-\inf \phi(T(x)),$$
but $T$ does not have any fixed point in $M$.
\end{example} 

Recently, a new approach has been discovered dealing with the mix between order ideas and metric ideas. It allows many authors to get new extensions of numerous classical fixed point results. However, in order to study and discuss Penot's problem, we use the ideas of this direction. For further details, one can consult \cite{ref6,ref13,ref7,ref12,ref11} and references therein.

\medskip
Let $(M,d,\preceq)$ be a metric space endowed with a partial order $\preceq$. Throughout, we assume that the
order intervals are closed. Recall that an order interval is any of the subsets
$$ [a, \rightarrow) = \{ x \in M: a \preceq x \}
,\; 
(\leftarrow,a] = \{ x \in M: x \preceq a \} $$
for any $a \in M$. As a direct consequence of this, the subset
$$ [a,b] = \{ x \in M: a \preceq x \preceq b \} = [a,\rightarrow) \cap (\rightarrow,b]$$
is also closed for any $a, b \in M$.
We will say that $x, y \in M$ are comparable whenever $x \preceq y$ or $y\preceq x$.

\begin{definition}\rm
A sequence $(x_n)_{n\in \mathbb{N}}$ in a partially ordered set $(M,\preceq)$ is said to be:
\begin{enumerate}
\item[(i)] monotone increasing if $x_n \preceq x_{n+1}$ , for all $n \in \mathbb{N}$;
\item[(ii)] monotone decreasing if $x_{n+1}\preceq x_n$, for all $n \in \mathbb{N}$;
\item[(iii)] monotone sequence if it is either monotone increasing or decreasing.
\end{enumerate}
\end{definition}

In \cite{ref14}, the authors introduced the notion of monotone norm in Banach spaces and gave some interesting class of spaces with this property. In fact, they gave this definition for the corresponding distance (i.e. $d_{\|\cdot\|}(x,y)=\|x-y\|$). For that, we recall it here for metric spaces.

\begin{definition}
Let $(M,d,\preceq)$ be an ordered metric space, $d$ is said to be monotone if $x\preceq y \preceq z$ implies
$$\max\{d(x,y), d(y,z)\} \leq d(x,z)$$ 
for any $x,y,z\in M$. 
\end{definition}

We generalize this notion as follows.

\begin{definition}
Let $(M,d,\preceq)$ be an ordered metric space. We say that $d$ is left-monotone if  
$$x\preceq y \preceq z\Rightarrow d(x,y) \leq d(x,z)$$
for all $x,y,z\in M$.
\end{definition}

\begin{example}
\begin{enumerate}
\item[(a)] If $d$ is monotone then $d$ is left-monotone.
\item[(b)] Let $M= \mathbb{R} $ endowed with the usual distance $d(x,y)=|x-y|$ and the usual order $\leq$. Then, $d$ is monotone.
\item[(c)] Let $M= \mathbb{R}_+ $ endowed with the usual order $\leq$. Let the distance $d$ defined on  $\mathbb{R}_+$ by 
$$ d(x,y)= \begin{cases} x+y &\text{ if } x\neq y \\
0 &\text{ if } x= y.
\end{cases} $$  
$d$ is left-monotone. For all $x,y,z \in  \mathbb{R}_+$, such that
$ x\leq y\leq z $, we have $d(x,y)\leq d(x,z)$. 
But, if $y\neq x$ and $y\neq z$ we have $d(y,z) > d(x,z)$, i.e., $d$ is not $d$ monotone.
\end{enumerate}
\end{example}
\section{Caristi-Penot fixed point theorem}\label{sec3}

In this section we prove some existence fixed point results for set-valued mappings in the spirit of Penot's formulation.

\begin{theorem}\label{Thm1}
Let  $(M,d,\preceq)$ be a partially ordered complete metric space where $d$ is left-monotone. Let $\phi : M \rightarrow \mathbb{R}_+$ be lower semi-continuous and $T : M \rightarrow \mathcal{K}(M)$ be a set-valued map such that for all $x\in M$, there exists $a\in Tx$, $x\preceq a$, such that 
$$ d(x,a) \leq \phi(x)- \inf(\phi(Tx \cap [x,a])).$$
Then, there exists $\overline{x} \in M$ such that $\overline{x} \in T\overline{x}$. 
\end{theorem}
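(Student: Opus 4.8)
The plan is to reduce the statement to Khamsi's set-valued Caristi theorem (Theorem \ref{Pthm}), which is already at our disposal. To invoke it, the only thing I need to check is its hypothesis: that for every $x \in M$ there exists some $y \in Tx$ with $d(x,y) \leq \phi(x) - \phi(y)$. The guiding idea is that the point $a$ furnished by our hypothesis controls a \emph{distance}, while the term $\inf(\phi(Tx \cap [x,a]))$ is controlled by an actual point of $Tx$ once I know this infimum is attained; left-monotonicity then transfers the distance estimate from $a$ to that point.

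First I would fix an arbitrary $x \in M$ and let $a \in Tx$ with $x \preceq a$ be the point supplied by the hypothesis. I set $K := Tx \cap [x,a]$. Since $Tx$ is compact and the order interval $[x,a]$ is closed (by the standing assumption that order intervals are closed), $K$ is a closed subset of a compact set, hence compact; it is nonempty because $x \preceq a \preceq a$ forces $a \in K$. As $\phi$ is lower semi-continuous, it attains its infimum on the compact set $K$, so there is a point $y \in K$ with $\phi(y) = \inf(\phi(K)) = \inf(\phi(Tx \cap [x,a]))$.

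Next I would exploit left-monotonicity. Since $y \in K$ we have $x \preceq y \preceq a$, so left-monotonicity of $d$ gives $d(x,y) \leq d(x,a)$. Combining this with the hypothesis and the identity for $\phi(y)$ yields
$$d(x,y) \leq d(x,a) \leq \phi(x) - \inf(\phi(Tx \cap [x,a])) = \phi(x) - \phi(y).$$
Because $y \in K \subseteq Tx$, this shows that for every $x \in M$ there exists $y \in Tx$ with $d(x,y) \leq \phi(x) - \phi(y)$, which is precisely the hypothesis of Theorem \ref{Pthm}. Applying that theorem yields $\overline{x} \in M$ with $\overline{x} \in T\overline{x}$, as required.

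The argument is short because each structural hypothesis plays exactly one role, so the only step needing genuine care is the attainment of the infimum: this is where compactness of the values $Tx$, together with lower semi-continuity of $\phi$, is essential, since it guarantees an honest minimizing point $y \in Tx$ to feed into Theorem \ref{Pthm} rather than merely an infimal value. I expect this attainment step, and the companion verification that $K$ is nonempty and compact, to be the main (if modest) obstacle; everything afterward is a mechanical substitution once left-monotonicity has been applied. Should one prefer an argument that avoids citing Theorem \ref{Pthm}, the same estimate $d(x,y) \leq \phi(x) - \phi(y)$ could instead be iterated to build a monotone increasing sequence $(x_n)$ with $(\phi(x_n))$ decreasing; the telescoping bound makes $(x_n)$ Cauchy, completeness provides a limit, and lower semi-continuity together with closedness of the order intervals would identify the limit as a fixed point — though it is exactly this last identification in the set-valued setting that is delicate, which is why leaning on Theorem \ref{Pthm} is the cleaner route.
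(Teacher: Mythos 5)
Your proposal is correct and follows essentially the same route as the paper: use compactness of $Tx\cap[x,a]$ and lower semi-continuity of $\phi$ to attain the infimum at some $y\in Tx$, apply left-monotonicity to get $d(x,y)\leq d(x,a)\leq\phi(x)-\phi(y)$, and conclude via the (set-valued) Caristi theorem. The only cosmetic difference is that the paper packages the minimizer as a selection $f$ and cites the single-valued Caristi theorem, while you invoke Theorem~\ref{Pthm} directly; your extra verification that $Tx\cap[x,a]$ is nonempty and compact is a welcome detail the paper glosses over.
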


\begin{proof}

Let $x\in M$. By hypothesis, there exists $a \in Tx$ such that $x \preceq a$ and 
$$ d(x,a) \leq \phi(x)- \inf(\phi(Tx \cap [x,a])).$$

Since $Tx \cap [x,a]$ is compact and $\phi$ is lower semicontinuous, there exists $f x \in T x \cap [x, a]$ such that $\phi(f x) = \inf \phi(T x\cap[x, a])$, so $d(x, a) \leq \phi(x)-\phi(f x).$ Since $d$ is left-monotone and $x \preceq f x \preceq a$, we get that $d(x, f x) \leq d(x, a)$. And then
$$d(x,fx)\leq \phi(x)-\phi(f x).$$

By Caristi's theorem, $f$ has a fixed point $x^*$. Since $f x^*\in T x^*$, $x^*$ is a fixed point of $T$.

\end{proof}

A similar result can be obtained without assuming that $\phi$ is lower semi-continuous and $T$ has compact values, but we need an additional condition, namely, the set-valued mapping $T$ has a closed graph. Recall that a set-valued $T:M\rightarrow 2^M$ has a closed graph if $G_T=\{(x,y)\in M^2 : y\in Tx\}$ is closed in $M^2$.

\begin{theorem}\label{THmCG}
Let  $(M,d,\preceq)$ be a partially ordered complete metric space where $d$ is left-monotone. Let $\phi : M \rightarrow \mathbb{R}_+$ be a function and $T : M \rightarrow 2^M$ a set-valued mapping with a closed graph such that for all $x\in M$, there exists $a\in Tx$, $x\preceq a$, such that 
$$ d(x,a) \leq \phi(x)- \inf(\phi(Tx \cap [x,a])).$$
Then, there exists $\overline{x} \in M$ such that $\overline{x} \in T\overline{x}$.
\end{theorem}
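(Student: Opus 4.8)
The plan is to follow the skeleton of Theorem~\ref{Thm1}, but since without compactness and lower semi-continuity the infimum $\inf\phi(Tx\cap[x,a])$ need not be attained, I cannot extract a minimizing selection $f$ and apply Caristi's theorem directly. Instead I would replace that single step by an explicit iteration with \emph{approximate} minimizers, and then use the closed graph of $T$ to produce the fixed point in the limit.

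First I would construct a monotone increasing sequence $(x_n)$ together with a fixed summable sequence $\varepsilon_n>0$, say $\varepsilon_n=2^{-n}$. Start from an arbitrary $x_0\in M$. Given $x_n$, the hypothesis yields $a_n\in Tx_n$ with $x_n\preceq a_n$ and $d(x_n,a_n)\le \phi(x_n)-\inf\phi(Tx_n\cap[x_n,a_n])$. Since $a_n\in Tx_n\cap[x_n,a_n]$, this set is nonempty, so by the definition of the infimum I can pick $x_{n+1}\in Tx_n\cap[x_n,a_n]$ with $\phi(x_{n+1})\le \inf\phi(Tx_n\cap[x_n,a_n])+\varepsilon_n$. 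Then $x_n\preceq x_{n+1}\preceq a_n$, so left-monotonicity of $d$ gives $d(x_n,x_{n+1})\le d(x_n,a_n)$, and combining the inequalities yields $d(x_n,x_{n+1})\le \phi(x_n)-\phi(x_{n+1})+\varepsilon_n$.

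Next I would show $(x_n)$ is Cauchy. Put $s_n=\phi(x_n)+\sum_{k\ge n}\varepsilon_k$, which is finite by summability and nonnegative because $\phi\ge 0$; the last inequality then reads $d(x_n,x_{n+1})\le s_n-s_{n+1}$. Hence $(s_n)$ is nonincreasing and bounded below, so it converges, and $\sum_n d(x_n,x_{n+1})\le s_0-\lim_n s_n<\infty$. Thus $(x_n)$ is Cauchy, and completeness of $M$ gives $x_n\to\overline{x}$ for some $\overline{x}\in M$.

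Finally I would invoke the closed graph. Because $x_{n+1}\in Tx_n$, each pair $(x_n,x_{n+1})$ lies in $G_T$; since $x_n\to\overline{x}$ and $x_{n+1}\to\overline{x}$, we have $(x_n,x_{n+1})\to(\overline{x},\overline{x})$ in $M^2$, and closedness of $G_T$ forces $(\overline{x},\overline{x})\in G_T$, i.e.\ $\overline{x}\in T\overline{x}$. The main obstacle is conceptual rather than computational: it is precisely this passage to the limit, where the loss of the minimizing selection must be compensated by the $\varepsilon_n$-approximate minimization (which keeps the telescoping sum summable) together with the closed-graph property (which certifies that the limit is a fixed point).
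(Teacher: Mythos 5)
Your proposal is correct and follows essentially the same route as the paper's own proof: an $\varepsilon_n$-approximate minimizer $x_{n+1}\in Tx_n\cap[x_n,a_n]$ with summable errors, left-monotonicity to pass from $d(x_n,a_n)$ to $d(x_n,x_{n+1})$, a telescoping bound giving a Cauchy sequence, and the closed graph to conclude. The only difference is cosmetic (your auxiliary sequence $s_n$ versus the paper's direct partial-sum estimate).
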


\begin{proof}
For $x_{0}\in M$, there exists $y_{0} \in Tx_{0}$ such that

$$ d(x_{0},y_{0}) \leq \phi(x_{0})- \inf(\phi(Tx_0 \cap [x_0,y_0])).$$

By definition of infimum,  there exists $x_{1} \in Tx_{0}$ such that $x_{0}\preceq x_{1}\preceq y_{0}$ and  
$$ d(x_{0},y_{0}) \leq \phi(x_{0})- \phi(x_{1})+\frac{1}{2}.$$

\medskip
Again, for this $x_1$, there exist $y_{1} \in Tx_{1}$ and $x_{2} \in Tx_{1}$ such that $x_{1}\preceq x_{2}\preceq y_{1}$ and  
$$ d(x_{1},y_{1}) \leq \phi(x_{1})- \phi(x_{2})+\frac{1}{2^2}.$$

\medskip
Continuing this process, we get two sequences $(x_{n})_{n\geq 0}$ and $(y_{n})_{n\geq 0}$ such that, for all $n\in \mathbb{N}$,  
$$ (x_{n+1},y_{n}) \in (Tx_{n})^{2}, \; x_{n}\preceq x_{n+1} \preceq y_{n} \text{ and } d(x_{n},y_{n}) \leq \phi(x_{n})- \phi(x_{n+1})+\frac{1}{2^{n+1}}.$$ 

For each $n\in\mathbb{N}$,

\[
\begin{array}{ccl}
  \sum_{k=0}^{n}d(x_{k},y_{k})&\leq &
\sum_{k=0}^{n}\left( \varphi(x_{k})-\varphi(x_{k+1}) \right)+\sum_{k=0}^n \frac{1}{2^{k+1}}\\
&= & \varphi(x_{0}) -\varphi(x_{n+1})+\sum_{k=0}^n \frac{1}{2^{k+1}}\\
&  \leq &\varphi(x_{0}) +\sum_{k=0}^\infty \frac{1}{2^{k+1}} < \infty.
\end{array}
\]
Then, the series $ \sum_{n\geq 0}d(x_{n},y_{n}) $ is convergent. 
As, $d$ is left-monotone and for all $n\in\mathbb{N}$, $x_{n}\preceq x_{n+1} \preceq y_{n}$, we have 
$$d(x_{n+1},x_{n}) \leq d(x_{n},y_{n}).$$ 

Therefore $ \sum_{n\geq 0}d(x_{n+1},x_{n}) $ is convergent. Hence, $(x_{n})_{n\geq 0}$ is a Cauchy sequence. As $M$ is a complete metric space, the sequence $(x_{n})_{n\geq 0}$  converges to $x^{*}= \lim_{n\to +\infty} x_{n}$. Since for all $n\in \mathbb{N}$, $x_{n+1} \in Tx_{n}$ and the mapping $T$ has a closed graph, we get $x^{*}\in Tx^{*}$. The proof is complete.
\end{proof}

\section{Caristi-Penot fixed point theorem in reflexive Banach spaces}\label{sec4}

As reflexive spaces are complete spaces, we get the following result as a corollary of Theorem \ref{Thm1}

\begin{theorem}\label{ThmR}
Let $(M,\|\cdot\|,\preceq)$ be a reflexive normed space endowed with a partial order $\preceq$. Suppose that $\|\cdot\|$ is left-monotone. Let $\phi : M \rightarrow \mathbb{R}_+$ be lower semi-continuous and 
$T : M \rightarrow \mathcal{K}(M)$ be a set-valued mapping such that, for all $x\in M$, there exists $a\in Tx$, $x\preceq a$, such that 
$$  \|x-a\| \leq \phi(x)- \inf(\phi(Tx \cap [x,a])).$$

Then, there exists $x^* \in M$ such that $x^* \in Tx^* $. 
\end{theorem}

Let $(\mathcal{WK})(X)$ be the family of all nonempty weakly compact subsets of $X$. The aim purpose of this section is to give a similar result of Theorem \ref{Thm1} where the values of $T$ are weakly compact. The next technical lemmas will be useful to establish Caristi-Penot theorem in reflexive normed spaces. Note that we assume that the order intervals are closed and convex. 

\begin{lemma}\cite[Lemma 1]{ref15}\label{L41} 
Let $X$ be a normed space endowed with a partial order $\preceq$. Assume that $(x_n)_{n\in \mathbb{N}}$ and $(y_n)_{n\in \mathbb{N}}$ are two sequences on $X$ which are weakly convergent to $x$ and $y$ respectively and $x_n \preceq y_n$ for any $n \in \mathbb{N}$, then
$$ x\preceq y.$$
\end{lemma}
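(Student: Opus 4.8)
The plan is to deduce the conclusion from a single soft principle: a convex, norm-closed subset of a normed space is weakly closed (Mazur's theorem, via Hahn--Banach separation), and hence weakly sequentially closed. The whole lemma then reduces to the assertion that the order relation survives passage to the weak limit.

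First I would package the hypotheses in the product space $X\times X$, equipped with, say, the norm $\|(u,v)\|=\|u\|+\|v\|$. Since weak convergence in $X\times X$ is exactly coordinatewise weak convergence, the assumptions that $(x_n)$ and $(y_n)$ converge weakly to $x$ and $y$ give $(x_n,y_n)\rightharpoonup(x,y)$ in $X\times X$. The object to target is the relation set $C=\{(u,v)\in X\times X : u\preceq v\}$: the hypothesis $x_n\preceq y_n$ says precisely that $(x_n,y_n)\in C$ for every $n$, while the desired conclusion $x\preceq y$ says $(x,y)\in C$.

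Next I would verify that $C$ is convex and norm-closed, so that Mazur's theorem applies and $C$ is weakly sequentially closed; the result is then immediate, since the sequence $(x_n,y_n)\in C$ converges weakly to $(x,y)$. In the standing framework, where the order is compatible with the vector structure and is determined by the closed convex positive cone $K=\{w : 0\preceq w\}$ (so that $u\preceq v\iff v-u\in K$), this is transparent: the order intervals being convex and closed forces $K$ to be convex and norm-closed, and $C$ is the preimage of $K$ under the continuous linear map $(u,v)\mapsto v-u$, hence convex and closed.

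The step I expect to be the real obstacle is the \emph{coupling} of the two sequences: one cannot simply invoke weak closedness of a single order interval $(\leftarrow,y]$ or $[x,\rightarrow)$, because the available inequalities $x_n\preceq y_n$ tie together moving indices rather than comparing $x_n$ to the fixed limit $y$. Passing to the product space removes this difficulty. If one prefers not to assume convexity of $C$ outright, an equivalent route is to apply Mazur's lemma to $(x_n,y_n)$ directly, extracting convex combinations $\sum_i\lambda_i x_i\to x$ and $\sum_i\lambda_i y_i\to y$ strongly with the \emph{same} weights $\lambda_i$; convexity of $K$ then yields $\sum_i\lambda_i x_i\preceq\sum_i\lambda_i y_i$, and closedness of $K$ under the strong limit gives $x\preceq y$. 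Either way, the crux is to handle both limits simultaneously, and the convexity of the order intervals is exactly what makes this possible.
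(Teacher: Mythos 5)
Your argument is correct and rests on exactly the same mechanism as the paper's proof: the paper passes directly to the difference sequence $(y_n - x_n)_n$, which converges weakly to $y-x$, and invokes Mazur's theorem to conclude that the closed convex positive cone is weakly closed, hence $y-x$ is positive. Your product-space formulation with the relation set $C$ as the preimage of the cone under $(u,v)\mapsto v-u$ is just a repackaging of that same difference trick, so the two proofs are essentially identical.
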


\begin{proof} 
Note that the positive sequence $(y_n - x_n)_n$ converges weakly to $y-x$.
Since closed convex subsets are also weakly closed, the positive cone is weakly closed and so we conclude that $y-x$ is positive.
\end{proof}

\begin{lemma}\cite[Proposition 3.3]{ref14} \label{L42}
Let $(x_n)_{n\in \mathbb{N}}$ be a bounded monotone increasing or decreasing sequence in $M$, and assume that $M$ is reflexive. Then, $(x_n)_{n\in \mathbb{N}}$ is weakly convergent.
\end{lemma}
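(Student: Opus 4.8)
The plan is to reduce to the case of a monotone increasing sequence (the decreasing case is symmetric after replacing $\preceq$ by its reverse), to extract a weakly convergent subsequence using reflexivity, and then to promote subsequential weak convergence to weak convergence of the whole sequence by showing that every weak cluster point of $(x_n)_n$ is the same point. The bridge between the order structure and the weak topology will be Lemma \ref{L41}, used repeatedly with one of the two sequences taken to be constant.

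First I would assume $(x_n)_n$ is monotone increasing. Since $M$ is reflexive and $(x_n)_n$ is bounded, weak compactness of closed balls together with the Eberlein--Šmulian theorem yields a subsequence $(x_{n_k})_k$ converging weakly to some $x \in M$. The first claim is that $x$ is an upper bound of the whole sequence. Fix $m$; then for every $k$ with $n_k \geq m$ one has $x_m \preceq x_{n_k}$, so applying Lemma \ref{L41} to the constant sequence equal to $x_m$ and to the tail of $(x_{n_k})_k$ (which are weakly convergent to $x_m$ and to $x$, respectively) gives $x_m \preceq x$. As $m$ was arbitrary, $x_m \preceq x$ for all $m$.

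Next I would establish uniqueness of the weak limit. Let $x'$ be the weak limit of any other weakly convergent subsequence $(x_{m_j})_j$; by the argument of the previous paragraph applied to this subsequence, we also have $x_m \preceq x'$ for all $m$. Now I compare $x$ and $x'$ by the constant-sequence device: from $x_{n_k} \preceq x'$ together with $x_{n_k} \rightharpoonup x$ (and $x'$ seen as a constant sequence), Lemma \ref{L41} gives $x \preceq x'$; symmetrically, from $x_{m_j} \preceq x$ together with $x_{m_j} \rightharpoonup x'$, Lemma \ref{L41} gives $x' \preceq x$. Since $\preceq$ is a partial order, antisymmetry forces $x = x'$, so all weak cluster points of $(x_n)_n$ coincide with $x$.

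Finally I would conclude that the entire sequence converges weakly to $x$. If it did not, there would exist $f \in M^{*}$, $\varepsilon > 0$, and a subsequence along which $|f(x_n) - f(x)| \geq \varepsilon$; being bounded, this subsequence would itself admit a further weakly convergent subsequence by reflexivity, whose limit is a weak cluster point of $(x_n)_n$ and hence equals $x$ by the uniqueness step, contradicting $|f(\cdot) - f(x)| \geq \varepsilon$. Thus $(x_n)_n$ converges weakly to $x$. The main obstacle is the uniqueness step: the genuinely order-theoretic information, namely monotonicity and the antisymmetry of $\preceq$, must be transported across the weak topology, and this is exactly what Lemma \ref{L41} enables, the comparison of two distinct cluster points being handled by pairing each weakly convergent subsequence against the appropriate constant sequence.
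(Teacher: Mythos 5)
The paper does not actually prove this lemma --- it is imported verbatim from \cite[Proposition 3.3]{ref14} --- so there is no in-house argument to set yours against; judged on its own, your proof is correct and complete. It is also economical in that every ingredient is already on the page: the extraction of a weakly convergent subsequence from a bounded sequence in a reflexive space is precisely the unnumbered lemma stated immediately after this one, Lemma \ref{L41} applied with one of the two sequences held constant is what carries $\preceq$ through weak limits (first to show every term of the sequence lies below every weak cluster point, then to compare two cluster points in both directions), antisymmetry collapses the set of cluster points to a single point, and the standard sub-subsequence argument upgrades this to weak convergence of the whole sequence. The one thing you should state explicitly is that all of this leans on the standing hypothesis, announced just before these lemmas, that order intervals are closed and convex: that is exactly what makes Lemma \ref{L41} available (weak closedness of the positive cone), and it is also what guarantees that the reversed order used in your reduction of the decreasing case again has closed convex order intervals. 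With that hypothesis recorded, there is no gap.
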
  

\begin{lemma} 
Let $(x_n)_{n\in \mathbb{N}}$ be a bounded sequence in a reflexive Banach space $M$. Then $(x_n)_{n\in \mathbb{N}}$ has a weakly convergent subsequence.
\end{lemma}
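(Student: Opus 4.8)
The plan is to reduce to a separable, reflexive setting and then run a diagonal extraction. First I would pass to the closed linear span $Y=\overline{\operatorname{span}}\{x_n : n\in\mathbb{N}\}$, which is a closed subspace of $M$ and hence itself a reflexive Banach space; moreover $Y$ is separable, since the $\mathbb{Q}$-linear combinations of the $x_n$ form a countable dense subset. The purpose of this reduction is that a separable reflexive space has separable dual: because $Y$ is reflexive, $Y^{\ast\ast}\cong Y$ is separable, and applying the standard fact that a Banach space whose dual is separable is itself separable (with $Y^{\ast}$ playing the role of that space, so that $(Y^{\ast})^{\ast}=Y^{\ast\ast}$ is separable) yields separability of $Y^{\ast}$.

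Next I would fix a countable dense set $\{f_k : k\in\mathbb{N}\}\subset Y^{\ast}$. For each fixed $k$ the scalar sequence $(f_k(x_n))_n$ is bounded, since $|f_k(x_n)|\le \|f_k\|\,\sup_n\|x_n\|<\infty$, so it admits a convergent subsequence. A Cantor diagonal argument then produces a single subsequence $(x_{n_j})_j$ along which $f_k(x_{n_j})$ converges for every $k$. Using the density of $\{f_k\}$ in $Y^{\ast}$ together with the uniform bound $\sup_j\|x_{n_j}\|<\infty$, a routine $3\varepsilon$-estimate upgrades this to convergence of $g(x_{n_j})$ for every $g\in Y^{\ast}$. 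Finally, every $f\in M^{\ast}$ restricts to an element $f|_Y\in Y^{\ast}$, so in fact $f(x_{n_j})$ converges for every $f\in M^{\ast}$.

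To conclude I would define $L(f):=\lim_j f(x_{n_j})$ for $f\in M^{\ast}$; this $L$ is linear and satisfies $|L(f)|\le \big(\sup_j\|x_{n_j}\|\big)\,\|f\|$, so $L\in M^{\ast\ast}$. Here reflexivity of $M$ enters decisively: the canonical embedding is surjective, so there exists $x\in M$ with $f(x)=L(f)=\lim_j f(x_{n_j})$ for every $f\in M^{\ast}$, which is precisely the assertion that $x_{n_j}$ converges weakly to $x$.

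The main obstacle is the passage from ``the scalar sequences $f(x_{n_j})$ converge'' to ``there is an actual weak limit point in $M$,'' which is exactly where reflexivity is indispensable; the separable-dual reduction is the enabling device that makes the diagonal argument run without assuming $M^{\ast}$ itself separable. Alternatively, one may simply invoke the Eberlein--\v Smulian theorem, which asserts that for the weak topology relative compactness and relative sequential compactness coincide, combined with Kakutani's theorem that the closed unit ball of a reflexive space is weakly compact; the argument sketched above is essentially a self-contained proof of the special case needed here.
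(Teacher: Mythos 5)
The paper states this lemma without proof, treating it as a classical fact (it is the standard consequence of Kakutani's theorem together with Eberlein--\v{S}mulian, as you note at the end). Your argument is the standard textbook proof --- reduction to the separable reflexive subspace $Y=\overline{\operatorname{span}}\{x_n\}$, separability of $Y^{\ast}$, a diagonal extraction, and surjectivity of the canonical embedding to produce the weak limit --- and it is correct and complete; there is nothing in the paper to compare it against beyond the bare citation of the result.
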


\begin{definition}
Let $M$ be a nonempty subset in normed space $X$. We say $\phi : M \rightarrow \mathbb{R}_+$ is lower weakly semi-continuous if for any sequence $(x_{n})_{n\geq 0}$  of $M$, converging weakly to an element $x$ of $M$, ${\displaystyle \phi(x)\leq \liminf_{n\to +\infty}\phi(x_{n})}$.
\end{definition}

Now, we are able to give the main result of this section.

\begin{theorem}\label{Thm88}
Let $(X,\|.\|,\preceq)$ be a reflexive space endowed with a partial order $\preceq$ and $M$ be a nonempty bounded closed subset of $X$. Suppose that $d$ is left-monotone. Let $\phi : M \rightarrow \mathbb{R}_+$ be a lower weakly semi-continuous, and 
$T : M \rightarrow (\mathcal{WK})(M)$ be a set-valued mapping such that, for all $x\in M$, there exists $a\in Tx$, $x\preceq a$, such that 
$$  \|x-a\| \leq \phi(x)- \inf(\phi(Tx \cap [x,a])).$$

Then, there exists $x^* \in M$ such that $x^* \in Tx^* $. 
\end{theorem}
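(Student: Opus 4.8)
The plan is to follow the architecture of the proof of Theorem \ref{Thm1} almost verbatim, replacing the strong-topology ingredients by their weak counterparts and then reducing to the classical (single-valued) Caristi theorem. For each $x\in M$ I would first manufacture a selection $fx$. By hypothesis there is $a\in Tx$ with $x\preceq a$ and $\|x-a\|\le\phi(x)-\inf\phi(Tx\cap[x,a])$. The decisive point is that the infimum is now attained: $Tx$ is weakly compact and the order interval $[x,a]$ is closed and convex, hence weakly closed by Mazur's theorem, so $Tx\cap[x,a]$ is weakly compact; since $\phi$ is lower weakly semicontinuous it attains its minimum on this weakly compact set at some $fx\in Tx\cap[x,a]$. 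Thus $\|x-a\|\le\phi(x)-\phi(fx)$, and because $d$ is left-monotone and $x\preceq fx\preceq a$ we obtain $\|x-fx\|\le\|x-a\|\le\phi(x)-\phi(fx)$.

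This defines a map $f:M\to M$ satisfying the Caristi inequality $\|x-fx\|\le\phi(x)-\phi(fx)$ for every $x\in M$. To finish I would invoke the classical Caristi theorem for the single-valued map $f$. Its two hypotheses hold here: $M$, being a closed subset of the reflexive (hence Banach) space $X$, is complete; and $\phi$ is lower semicontinuous, which I would deduce from lower weak semicontinuity by noting that every strongly convergent sequence converges weakly, so the defining inequality of lower weak semicontinuity is in particular satisfied along strongly convergent sequences. Caristi then yields a point $x^*$ with $fx^*=x^*$, and since $fx^*\in Tx^*$ we conclude $x^*\in Tx^*$, as desired.

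The step I expect to carry the real weight is the attainment of the infimum in the selection: this is precisely where the weak-compactness of the values of $T$, the convexity (not merely closedness) of the order intervals, and lower weak semicontinuity of $\phi$ are all genuinely used, and it is what forces the passage from the compact setting of Theorem \ref{Thm1} to the weak topology. If one prefers a self-contained argument that does not quote the classical Caristi theorem, I would instead introduce the Br{\o}ndsted order $x\preceq_\phi y\iff(x\preceq y\text{ and }\|x-y\|\le\phi(x)-\phi(y))$ and produce a $\preceq_\phi$-maximal element by Zorn's lemma; there the main obstacle becomes showing that every chain has an upper bound, for which the telescoping Caristi estimate gives a strongly Cauchy cofinal sequence whose limit dominates the whole (possibly uncountable) chain, Lemma \ref{L41} supplying the order relation in the limit and lower weak semicontinuity controlling $\phi$ at the limit. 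A maximal element $\bar x$ then satisfies $\bar x\preceq_\phi f\bar x$, forcing $f\bar x=\bar x\in T\bar x$.
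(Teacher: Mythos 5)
Your argument is correct, but after its first step it takes a genuinely different and shorter route than the paper's. The paper's proof begins exactly as you do: it uses weak compactness of $Tx$ together with the closedness and convexity of order intervals (hence their weak closedness) and the lower weak semicontinuity of $\phi$ to realize $\inf\phi(Tx\cap[x,a])$ at some point of $Tx\cap[x,a]$. But instead of reducing to the single-valued Caristi theorem, it then introduces the Br{\o}ndsted-type order $y<_{\phi}x\iff\exists a$ with $y\in[x,a]$ and $\|x-a\|\leq\phi(x)-\phi(y)$, verifies it is a partial order, shows every decreasing chain has a lower bound (choosing a subsequence minimizing $\phi$, applying Lemma \ref{L42} to get weak convergence of the bounded monotone subsequence, Lemma \ref{L41} to pass the order to the limit, weak lower semicontinuity of the norm and of $\phi$ in the telescoping estimate, and then a two-case argument to dominate the entire, possibly uncountable, chain), and finally applies Zorn's lemma and shows a minimal element is fixed --- in other words, essentially your ``self-contained'' alternative. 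Your primary reduction is sound: the selection $fx$ lies in $M$, the inequality $\|x-fx\|\leq\|x-a\|\leq\phi(x)-\phi(fx)$ follows from left-monotonicity exactly as in Theorem \ref{Thm1}, $M$ is complete as a closed subset of a reflexive (hence complete) space, and the paper's notion of lower weak semicontinuity does imply norm lower semicontinuity because strongly convergent sequences converge weakly. Your route is more economical --- it does not even use the boundedness of $M$, which the paper needs for Lemma \ref{L42} --- while the paper's Zorn argument is self-contained in the weak topology. The one step worth spelling out in your version is the attainment of the infimum: with the paper's sequential definition of lower weak semicontinuity this requires weak \emph{sequential} compactness of $Tx\cap[x,a]$, i.e., an appeal to the Eberlein--\v{S}mulian theorem (automatic here since $X$ is reflexive), a point the paper also glosses over.
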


\begin{proof}
Let $x\in M$. As $Tx$ is weakly compact and $\phi$ is lower weakly semi-continuous function, there exist $a,y \in Tx$ such that $ \inf(\phi(Tx \cap [x,a]))= \phi(y)$. So 
$$ \|x-a\| \leq \phi(x)- \phi(y).$$
Define the relation $<_{\phi}$ on $M$ by 
\begin{equation}\label{eq1}
  y <_{\phi} x \Leftrightarrow [ \exists a\in M, \; y\in [x,a] \text{ and } \|x-a\| \leq \phi(x)- \phi(y) ],\tag{PO}
\end{equation}
for all $x,y\in M$. Then, the relation $<_{\phi}$ is a partial order on $M$. Indeed,
\begin{itemize}
\item The relation $<_{\phi}$ is reflexive $x<_{\phi} x$, take $x=a$,
\item $<_{\phi}$ is antisymmetric, if $x<_{\phi}y$ and $y<_{\phi}x$, then there exist $a,b\in M$ such that $y\in [x,a]$ and $x\in [y,b]$, so $x=y$, 
\item $<_{\phi}$ is transitive, if $y<_{\phi}x$ and $x<_{\phi}z$, then there exist $a,b\in M$ 
$$
y\in [x,a] \text{ and } \|x-a\| \leq \phi(x)- \phi(y)
$$
and
$$
x\in [z,b] \text{ and } \|z-b\| \leq \phi(z)- \phi(x).
$$
Since $z\preceq x\preceq b$ and $x\preceq y$, so $y\in [z,a]$. By the left-monotonicity we have $\|z-x\| \leq \|z-b\|$. Hence,
\begin{align*}
\|z-a\| & \leq \|z-x\| + \|x-a\| \\
& \leq   \|z-b\|+\|x-a\| \\
& \leq \phi(z)- \phi(y). 
\end{align*}
Thus, $y<_{\phi} z$.
\end{itemize} 

We show that any decreasing chain in $(M, <_{\phi})$ has a lower bound. Let $(x_{\alpha})_{\alpha \in \Gamma}$ be a decreasing chain in $(M, <_{\phi})$ and let $(\alpha_{n})$ be an increasing sequence of elements from $\Gamma$ such that 
$$ \lim_{n\to +\infty} \phi(x_{\alpha_{n}}) = \inf\{\phi(x_{\alpha}) :  \alpha \in \Gamma \}.$$

\begin{itemize}
  \item[\textbf{Step 1.}]
As $(x_{\alpha_{n}})_{n\in \mathbb{N}}$ is a bounded increasing sequence w.r.t. $\preceq$ in reflexive space $X$, so by Lemma \ref{L41} and Lemma \ref{L42}, $(x_{\alpha_{n}})_{n\geq \mathbb{N}}$ converges weakly to an element $x\in M$ and $x_{\alpha_{n}} \preceq x$, for all $n\in \mathbb{N}$. Let $n,m\in \mathbb{N}$ such that $m>n$. Then
$$ \|x_{\alpha_{n}}-x_{\alpha_{m}}\|\leq \sum_{k=n}^{m-1} \|x_{\alpha_{k}}-x_{\alpha_{k+1}}\|.$$

Since $$ x_{\alpha_{n}}\preceq x_{\alpha_{n+1}} \preceq \cdots\preceq x_{\alpha_{k}} \preceq x_{\alpha_{k+1}}\preceq\cdots \preceq x_{\alpha_{m}}$$
so, for each integer $k \in [n,m)$, there exists $a_k\in Tx_{\alpha_k}$ such that
$$\|x_{\alpha_{k}}-a_{k}\|\leq \phi(x_{\alpha_{k}}) - \phi(x_{\alpha_{k+1}}),$$ 
and by the left monotony we get
$$\|x_{\alpha_{k}}-x_{\alpha_{k+1}}\| \leq \phi(x_{\alpha_{k}}) - \phi(x_{\alpha_{k+1}}).$$

Hence,
$$ \|x_{\alpha_{n}}-x_{\alpha_{m}}\|\leq \sum_{k=n}^{m-1}(\phi(x_{\alpha_{k}}) - \phi(x_{\alpha_{k+1}})) = \phi(x_{\alpha_{n}}) - \phi(x_{\alpha_{m}}).$$ 
Letting $m\rightarrow +\infty$, we obtain
\begin{align*} \|x_{\alpha_{n}}-x\| & \leq \liminf_{m\to +\infty} \|x_{\alpha_{n}}-x_{\alpha_{m}}\| \\
& \leq  \phi(x_{\alpha_{n}}) - \lim_{m\to +\infty} \phi(x_{\alpha_{m}})\\
& \leq \phi(x_{\alpha_{n}}) - \phi(x),\quad\text{because}\;\phi\; \text{is lower weakly semi-continuous}. 
\end{align*}
Thus, for $a=x$ and all $n\in \mathbb{N}$, 
$$ x_{\alpha_{n}} \preceq x \preceq a \text{ and } 
\|x_{\alpha_{n}}-x\| \leq  \phi(x_{\alpha_{n}}) - \phi(x),$$
i.e., $x$ is a lower bound for $(x_{\alpha_{n}})_{n\in \mathbb{N}}$ in $(M,<_\phi)$.

\item[\textbf{Step 2.}] In order to see that $x$ is a lower bound for $(x_{\alpha})_{\alpha \in \Gamma}$. 

\begin{itemize}
  \item[Case 1.]  Suppose there exists $\beta \in \Gamma$ such that $x_{\beta} <_{\phi} x_{\alpha_{n}}$ for all $n \in \mathbb{N}$. Then
 $$\phi(x_{\beta}) = \inf\{\phi(x_{\alpha}) : \alpha \in \Gamma \}.$$
Since $x_{\beta} <_{\phi} x_{\alpha_{n}}$ for all $n \in \mathbb{N}$, there exists $c_{n} \in M$ such that 
$$  x_{\alpha_{n}} \preceq  x_{\beta} \preceq c_{n} \text{ and } \| x_{\alpha_{n}}-c_{n}\| \leq \phi(x_{\alpha_{n}}) - \phi(x_{\beta}).$$ 
Letting $n\rightarrow +\infty$, we get $\lim_{n\to +\infty} \|x_{\alpha_{n}} -c_{n}\|=0$. As $(c_n)_n$ is a bounded sequence in $M$, there exists a subsequence $(c_{\psi(n)})_{n\in \mathbb{N}}$ of $(c_n)_n$ which converges weakly to an element $c\in M$. Then, 
$$ \|x-c\| \leq \liminf_{n\to +\infty} \|x_{\psi(\alpha_{n})}-c_{\psi(n)}\|=0.$$ 
Thus, $x=c$. Since, $x_{\alpha_{\psi(n)}} \preceq  x_{\beta}\preceq c_{\psi(n)}$ for all $n\in \mathbb{N}$, we get $x= x_{\beta} $. 

\item[Case 2.] Suppose for any $\alpha \in \Gamma$, there exists $n \geq  0$ such that $ x_{\alpha_{n}} <_{\phi} x_{\alpha}$, i.e., there exists $z_{n} \in M$ satisfies
$$
x_{\alpha} \preceq x_{\alpha_{n}} \preceq z_{n}
$$ 
and
$$
\|x_{\alpha}-z_{n}\| \leq  \phi(x_{\alpha}) - \phi(x_{\alpha_{n}}).
$$
   
So, $x_{\alpha}\preceq x_{\alpha_{n}} \preceq x$  and   
$$
\begin{array}{ccl}
  \|x_{\alpha}-x_{\alpha_{n}}\| &\leq& \|x_{\alpha}-z_{n}\|\\
   &\leq & \phi(x_{\alpha}) - \phi(x_{\alpha_{n}})\\
   &\leq &\phi(x_{\alpha}) - \inf\{\phi(x_{\alpha}) :  \alpha \in \Gamma \} \\
   &\leq & \phi(x_{\alpha}) - \phi(x) .
\end{array}
$$   
Thus, $x <_{\phi} x_{\alpha}$,  i.e., $x$ is a lower bound of $(x_{\alpha})_{\alpha \in \Gamma}$. 
\end{itemize}

\item[\textbf{Step 3.}]
Zorn's lemma will therefore imply that $(M, <_{\phi})$ has minimal elements, that we denote by $\overline{x} $. For this $\overline{x} $, there exist $\overline{a} \in T\overline{x}$ and $\overline{y}\in T\overline{x} \cap [\overline{x},\overline{a}]$ such that 
$$\inf(\phi(T\overline{x} \cap [\overline{x},\overline{a}]))= \phi(\overline{y}).$$ 
So 
$$ \overline{x}\preceq  \overline{y}\preceq \overline{a}\quad \text{and}\quad \|\overline{x}-\overline{a}\| \leq \phi(\overline{x})- \phi(\overline{y}) .$$
Hence, $ \overline{y} <_{\phi}  \overline{x}$. Since, $\overline{x}$ is minimal elements of $(M,<_{\phi})$, we get $\overline{x}=\overline{y} \in T\overline{x}$. The proof is complete.
\end{itemize}
\end{proof}

 \section*{Acknowledgements} The authors would like to thank the anonymous referees for their careful reading of our manuscript and their many thoughtful comments and suggestions.


\begin{thebibliography}{10}

\bibitem{ref6}
M.~Aamri, K.~Chaira, S.~Lazaiz, and El-M. Marhrani.
\newblock Caristi type fixed point theorems using Sz{\'a}z principle in
  quasi-metric spaces.
\newblock {\em Carpathian J. Math.}, 36(2):179--188, 2020.

\bibitem{ref13}
M.~R. Alfuraidan and M.~A. Khamsi.
\newblock Remarks on Caristi's fixed point theorem in metric spaces with a
  graph.
\newblock {\em Abstr. Appl. Anal.}, 2014, 2014.

\bibitem{ref14}
M.~R. Alfuraidan and M.~A. Khamsi.
\newblock Fibonacci--Mann iteration for monotone asymptotically nonexpansive
  mappings.
\newblock {\em Bull. Aust. Math. Soc.}, 96(2):307--316, 2017.

\bibitem{ref3}
S.~Banach.
\newblock Sur les op{\'e}rations dans les ensembles abstraits et leur
  application aux {\'e}quations int{\'e}grales.
\newblock {\em Fund. math}, 3(1):133--181, 1922.

\bibitem{ref5}
A.~Br{\o}ndsted.
\newblock Fixed points and partial orders.
\newblock {\em Proc. Amer. Math. Soc.}, 60(1):365--366, 1976.

\bibitem{ref2}
J.~Caristi.
\newblock Fixed point theorems for mappings satisfying inwardness conditions.
\newblock {\em Trans. Amer. Math. Soc.}, 215:241--251, 1976.

\bibitem{ref7}
K.~Chaira, A.~Eladraoui, M.~Kabil, and S.~Lazaiz.
\newblock Extension of Kirk-Saliga fixed point theorem in a metric space with a
  reflexive digraph.
\newblock {\em Int. J. Math. Math. Sci.}, 2018, 2018.

\bibitem{ref15}
K.~Chaira, M.~Kabil, A.~Kamouss, and S.~Lazaiz.
\newblock Best proximity points for monotone relatively nonexpansive mappings
  in ordered banach spaces.
\newblock {\em Axioms}, 8(4):121, 2019.

\bibitem{ref16}
Y.~Feng and S.~Liu.
\newblock Fixed point theorems for multi-valued contractive mappings and
  multi-valued Caristi type mappings.
\newblock {\em J. Math. Anal. Appl.}, 317(1):103--112, 2006.

\bibitem{ref8}
J.~R. Jachymski.
\newblock Caristi's fixed point theorem and selections of set-valued
  contractions.
\newblock {\em J. Math. Anal. Appl.}, 227(1):55--67, 1998.

\bibitem{ref17}
J.~S. Jung, Y.~J. Cho, S.~M. Kang, and S-S. Chang.
\newblock Coincidence theorems for set-valued mappings and Ekeland's
  variational principle in fuzzy metric spaces.
\newblock {\em Fuzzy Sets and Systems}, 79(2):239--250, 1996.

\bibitem{ref1}
M.~A. Khamsi.
\newblock Remarks on Caristi's fixed point theorem.
\newblock {\em Nonlinear Anal.}, 71(1-2):227--231, 2009.

\bibitem{ref18}
A.~T-M. Lau and L.~Yao.
\newblock Common fixed point properties for a family of set-valued mappings.
\newblock {\em J. Math. Anal. Appl.}, 459(1):203--216, 2018.

\bibitem{ref4}
W.~Lee and Y.~Choi.
\newblock A survey on characterizations of metric completeness.
\newblock In {\em Nonlinear Analysis Forum}, volume~19, pages 265--276, 2014.

\bibitem{ref10}
S.~B. Nadler.
\newblock Multi-valued contraction mappings.
\newblock {\em Pacific J. Math.}, 30(2):475--488, 1969.

\bibitem{ref12}
J.~J. Nieto and R.~Rodr{\'\i}guez-L{\'o}pez.
\newblock Contractive mapping theorems in partially ordered sets and
  applications to ordinary differential equations.
\newblock {\em Order}, 22(3):223--239, 2005.

\bibitem{ref9}
J-P. Penot.
\newblock Fixed point theorems without convexity.
\newblock {\em M{\'e}m. Soc. Math. Fr. ( N.S.)}, 60:129--152, 1979.

\bibitem{ref11}
A.~C.~M. Ran and M.~C.~B. Reurings.
\newblock A fixed point theorem in partially ordered sets and some applications
  to matrix equations.
\newblock {\em Proc. Amer. Math. Soc.}, pages 1435--1443, 2004.

\end{thebibliography}
\end{document}